\newtheorem{theorem}{Theorem}[section]
\newtheorem{corollary}[theorem]{Corollary}
\newtheorem{lemma}[theorem]{Lemma}
\newtheorem{defn}[theorem]{Definition}
\newcommand{\zee}{\mathbb{Z}}
\newcommand{\arr}{\mathbb{R}}
\newcommand{\eff}{\mathbb{F}}
\newcommand{\enn}{\mathbb{N}}
\newcommand{\iso}{\cong}
\title{Embeddings of weighted graphs in Erd\H{o}s-type settings}
\author{David M. Soukup}
\date{\today}
\begin{document}
\maketitle

\begin{abstract}
Many recent results in combinatorics concern the relationship between the size of a set and the number of distances determined by pairs of points in the set. One extension of this question considers configurations within the set with a specified pattern of distances. In this paper, we use graph-theoretic methods to prove that a sufficiently large set $E$ must contain at least $C_G|E|$ distinct copies of any given weighted tree $G$, where $C_G$ is a constant depending only on the graph $G$.
\end{abstract}

\section{Introduction}
Many questions in combinatorics involve the behavior of the distance set $\Delta(E)$ of a set $E$ , defined as $\Delta(E) = \{ d(\mathbf{x}, \mathbf{y}) : \mathbf{x}, \mathbf{y} \in E\}$ for some distance function $d$. For instance, Erd\H{o}s' celebrated distinct distances problem conjectured that for finite sets $E \subset \arr^2$,  $|\Delta(E)| \gtrsim |E|^{1 - \epsilon}$ for any positive $\epsilon$. This conjecture was proven in this form by Guth and Katz in \cite{GK15}. Distance problems where the ambient space is a finite vector space have also been a subject of much research \cite{BKT04, IR07, KS12, V08}.

A natural question follows: under what conditions on $E$ can we find not just pairs of points a specified distance apart, but groups of points with some specfied pattern of distances? In other words, given some weighted graph $G$ where the edge weights correspond to distances between points, when can we find a ``copy" of $G$ inside $E$? Bennett, Chapman, Covert, Hart, Iosevich, and Pakianathan proved a result in this direction for the Euclidean distance in $\zee_p^d$ and path graphs in \cite{BCCHIP}, and McDonald gives a similar result for dot products in $\zee_p^d$ in \cite{M16}. In this paper, we give an answer to this question for wide classes of graphs, distances, and ambient sets.  Moreover, we show that these questions, and other similar ones, are part of a much more general framework. 

\begin{defn}
\label{ksurj}
 Given an ambient set $X$ and some set $D$ of possible distances, a \textbf{symmetric distance function} is a function $d: X \times X \to D$ such that $d(x_1, x_2) = d(x_2, x_1)$ for all $x_1, x_2 \in X$. Such a function is $\mathbf{K}$\textbf{-surjective} if for every $E \subset X$ with $|E| \geq K$, the restriction of $d$ to $E \times E$ is surjective. In other words, every set of size at least $E$ determines every distance.
\end{defn}

Some motivation for this definition is provided by the following theorem, which will allow us to apply the results of this paper to a common setting for distance problems. Throughout, we will use $\eff_q$ to refer to the unique finite field with $q$ elements for some prime power $q$, and we will let $\eff_q^d$ be a $d$-dimensional vector space over this field.
\begin{theorem}[A. Iosevich and M. Rudnev (2007) \cite{IR07}]
\label{euclid}
Let $X =  \eff_q^d$, $D = \eff_q$, and define $d(\{ x_i\}, \{ y_i\}) = \sum(x_i - y_i)^2$. Then $d$ is $K$-surjective with $K = Cq^{(d + 1) / 2}$ for some constant $C$ independent of $q$.
\end{theorem}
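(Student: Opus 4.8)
The plan is to recast $K$-surjectivity as a positivity statement for a counting function and then estimate that count by Fourier analysis on $\eff_q^d$. Writing $\|x\|^2 = \sum_i x_i^2$, surjectivity of $d$ on $E \times E$ means that for every $t \in \eff_q$ there exist $x, y \in E$ with $\|x-y\|^2 = t$. Since $d(x,x) = 0$, the distance $0$ is attained automatically, so the whole content lies in the nonzero distances. Accordingly I set
\[
\nu(t) = \left| \{ (x,y) \in E \times E : \|x-y\|^2 = t \} \right|,
\]
and the goal reduces to showing $\nu(t) > 0$ for every $t \in \eff_q$ whenever $|E| \geq C q^{(d+1)/2}$.

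First I would fix a nontrivial additive character $\chi$ of $\eff_q$ and set up the Fourier transform $\widehat{f}(\xi) = \sum_{x} f(x)\chi(-x\cdot\xi)$ on $\eff_q^d$. Letting $S_t$ denote the indicator of the sphere $\{ x \in \eff_q^d : \|x\|^2 = t\}$ and abusing notation to let $E$ also denote the indicator of the set $E$, expanding $S_t$ in characters yields
\[
\nu(t) = \sum_{x,y} E(x)E(y)\, S_t(x-y) = \frac{1}{q^d}\sum_{\xi \in \eff_q^d} \widehat{S_t}(\xi)\,\bigl|\widehat{E}(\xi)\bigr|^2 .
\]
The term $\xi = 0$ gives the main contribution $|S_t|\,|E|^2 / q^d$, and since a standard character-sum count gives $|S_t| = q^{d-1}\bigl(1 + o(1)\bigr)$, this main term is of size $\approx |E|^2 / q$.

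It then remains to control the error term $\frac{1}{q^d}\sum_{\xi \neq 0}\widehat{S_t}(\xi)\,|\widehat{E}(\xi)|^2$. The crucial input is the Fourier decay of the sphere: for every $\xi \neq 0$ one has $|\widehat{S_t}(\xi)| \lesssim q^{(d-1)/2}$. Granting this, I pull out the uniform bound and apply Plancherel's identity $\sum_\xi |\widehat{E}(\xi)|^2 = q^d|E|$ to obtain that the error is $\lesssim q^{(d-1)/2}|E|$. Combining the two estimates gives
\[
\nu(t) \geq \frac{|S_t|\,|E|^2}{q^d} - C' q^{(d-1)/2}|E| \gtrsim \frac{|E|^2}{q} - C' q^{(d-1)/2}|E|,
\]
which is strictly positive as soon as $|E| > C q^{(d+1)/2}$ for a suitable constant $C$ absorbing $C'$ and the lower-order terms in $|S_t|$. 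This is precisely $K$-surjectivity with $K = C q^{(d+1)/2}$.

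The main obstacle, and the only place where the arithmetic of $\eff_q$ genuinely enters, is the Fourier decay bound $|\widehat{S_t}(\xi)| \lesssim q^{(d-1)/2}$. To establish it I would detect the condition $\|x\|^2 = t$ with an auxiliary character sum over a parameter $s \in \eff_q$, interchange the order of summation, and complete the square in each coordinate; this factors the inner sum into a product of $d$ one-dimensional Gauss sums, each of modulus $q^{1/2}$, leaving an outer sum in $s$ that is estimated trivially or via a Kloosterman-type bound. Carrying out this Gauss-sum evaluation carefully (together with the companion computation of $|S_t|$) is the technical heart of the argument; once it is in hand, the remainder is routine bookkeeping with Plancherel's identity and the triangle inequality.
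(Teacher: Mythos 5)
This theorem is imported by the paper from Iosevich--Rudnev \cite{IR07} and is not proved anywhere in the text, so there is no internal proof to compare against; what you have written is a faithful outline of the standard argument from that reference. Your reduction of $K$-surjectivity to the positivity of $\nu(t)$ for $t \neq 0$ (with $t = 0$ witnessed by the diagonal), the identity $\nu(t) = q^{-d}\sum_{\xi}\widehat{S_t}(\xi)|\widehat{E}(\xi)|^2$, the main term $|S_t||E|^2/q^d \approx |E|^2/q$, and the Plancherel step bounding the error by $q^{(d-1)/2}|E|$ are all correct and are exactly how the threshold $q^{(d+1)/2}$ arises. One point deserves correction: in your treatment of the sphere decay $|\widehat{S_t}(\xi)| \lesssim q^{(d-1)/2}$, the outer sum over the parameter $s$ cannot be ``estimated trivially.'' After completing the square, the $s = 0$ term vanishes for $\xi \neq 0$ and the remaining sum has the form $q^{-1}\sum_{s \neq 0}\chi\bigl(-st - \|\xi\|^2/(4s)\bigr)\,g(s)^d$ with $|g(s)| = q^{1/2}$; the trivial bound on the $s$-sum yields only $q^{d/2}$, which is larger than the required $q^{(d-1)/2}$ by a factor of $q^{1/2}$ and would push the threshold up to $q^{d/2 + 1}$. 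The square-root cancellation in $s$ --- the Weil bound for the resulting (twisted) Kloosterman or Sali\'e sum --- is therefore not optional but is the decisive input, and it is also where the restriction $t \neq 0$ matters, since the decay estimate can fail for the zero sphere (a cone) in even dimensions; your early disposal of $t = 0$ conveniently sidesteps that issue.
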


Now we just need to define a graph embedding, which is done in the natural way:



\begin{defn}
\label{embed}
Suppose we have a space $X$, a set of distances $D$, and a symmetric distance function $d$. Then for a weighted graph $G$ with edge weights in $D$, an \textbf{embedding} of $G$ into $X$ is an injective function $f: V(G) \to X$ such that for every edge $(v_1, v_2) \in E(G)$ with weight $t$,
\[
d(f(v_1), f(v_2)) = t
\]
We will typically identify such an embedding with its image in $X$. A collection of such embeddings $\{ f_i\}_{i \in I}$ is \textbf{disjoint} if all its images are disjoint subsets of $X$.
\end{defn}

\subsection{Results}

The constant in our result will depend on an invariant of the graph $G$:
\begin{defn}
\label{stringiness}
Let $G$ be a finite nonempty connected graph; let the degrees of the vertices of $G$ be $d_1, d_2, \dots, d_n$, ordered so that $d_1 \geq d_2 \geq \dots \geq d_n$. Then the $stringiness$ of $G$, denoted $\sigma(G)$, is defined to be 
\[
\sigma(G) = (d_1 + 1)\prod_{i = 2}^{n} d_i .
\]
\end{defn} 
For example, the stringiness of the Petersen graph is $4 \cdot 3^9 = 78732$. The following estimate gives bounds on the stringiness of a tree:

\begin{theorem}
\label{stringinessbounds}
Let $G$ be a nonempty tree with $n$ edges. Then
\[
n + 1 \leq \sigma(G) \leq 2^n.
\]
\end{theorem}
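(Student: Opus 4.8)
The plan is to reduce the whole statement to three elementary facts about a tree $G$ with $n$ edges: it has exactly $n+1$ vertices, its degree sequence satisfies $\sum_{i=1}^{n+1} d_i = 2n$ by the handshake lemma, and every vertex has $d_i \geq 1$ with maximum degree $d_1 \leq n$ (a vertex meets at most $n$ others). Writing $\sigma(G) = (d_1+1)\prod_{i=2}^{n+1} d_i$, I would treat the two inequalities separately, in each case converting the product of degrees into an additive quantity that the handshake identity can control.

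For the upper bound I would use the pointwise estimate $d \leq 2^{d-1}$, valid for every positive integer $d$ (equality exactly when $d \in \{1,2\}$), together with the companion $d_1 + 1 \leq 2^{d_1}$. Applying the first to each factor with $i \geq 2$ gives $\prod_{i=2}^{n+1} d_i \leq 2^{\sum_{i=2}^{n+1}(d_i-1)}$, and since there are $n$ such factors the exponent simplifies to $(2n - d_1) - n = n - d_1$. Multiplying by $d_1 + 1 \leq 2^{d_1}$ then collapses the powers of two to $2^{d_1} \cdot 2^{n-d_1} = 2^n$. This direction is short; the only thing to verify is the two scalar inequalities.

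The lower bound is where the real work lies, and I expect it to be the main obstacle: a product of positive integers with a fixed sum can in principle be made small, so knowing $d_i \geq 1$ alone is useless, and one must exploit the fixed number of factors. My plan is to apply the super-additivity (Weierstrass) inequality $\prod_i (1 + a_i) \geq 1 + \sum_i a_i$ with $a_i = d_i - 1 \geq 0$, which yields $\prod_{i=2}^{n+1} d_i \geq 1 + \sum_{i=2}^{n+1}(d_i - 1) = (n+1) - d_1$. Hence $\sigma(G) \geq (d_1 + 1)\bigl((n+1) - d_1\bigr)$. It then remains to note that the right-hand side is a concave quadratic in the single variable $d_1$ on the range $1 \leq d_1 \leq n$, so its minimum over that interval is attained at an endpoint: $d_1 = 1$ gives $2n \geq n+1$, and $d_1 = n$ (the star) gives exactly $n+1$. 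The delicate point is making sure the reduction to a function of $d_1$ is a valid lower bound for \emph{every} admissible degree sequence and then checking both endpoints; the star case also shows the inequality is sharp.
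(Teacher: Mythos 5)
Your proposal is correct, and both halves take a genuinely different route from the paper. For the lower bound the paper argues by induction on $n$: deleting a leaf strictly decreases the stringiness (the neighbor's degree drops by one), so $\sigma(G) > \sigma(G-v) \geq n$ and integrality gives $\sigma(G) \geq n+1$. You instead prove it directly: the Weierstrass inequality $\prod(1+a_i) \geq 1 + \sum a_i$ with $a_i = d_i - 1$ reduces everything to the single-variable bound $\sigma(G) \geq (d_1+1)\bigl((n+1)-d_1\bigr)$, which a concavity/endpoint check finishes since $1 \leq d_1 \leq n$. Your version buys a slightly stronger intermediate inequality that makes visible why the star is the extremal case, at the cost of a little more bookkeeping; the paper's induction is shorter but gives only the final bound. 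For the upper bound the paper applies AM--GM to the $n$ nontrivial factors (after discarding one degree-$1$ leaf) and uses the handshake sum to get $\sigma(G)^{1/n} \leq 2$; you replace AM--GM with the pointwise estimates $d \leq 2^{d-1}$ and $d_1 + 1 \leq 2^{d_1}$ and again close with the handshake identity. These are essentially equivalent in strength here, and both correctly identify where equality is approached. The only cosmetic caveat is that your argument, like the paper's, implicitly assumes $n \geq 1$ so that every $d_i \geq 1$; the degenerate single-vertex tree should be dispatched separately if ``nonempty'' is read as permitting $n = 0$.
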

The lower bound is sharp (the star graph $K_{1, n}$ attains it) while the upper bound is sharp up to a constant (the path graph $P_{n + 1}$ has stringiness $3 \cdot 2^{n - 2}$).

Now we are ready to state our main theorem:
\begin{theorem}[Main theorem]
\label{main}
Let $X$ be a set with a symmetric distance function $d$ to a set of distances $D$, let $d$ be $K$-surjective, and let $E \subseteq X$ with $|E| = rK$. Then for any weighted tree $G$ with edge weights in $D$, there exists a disjoint collection $A_G$ of embeddings of $G$ into $E$ with
\[
|A_G| \geq \left(\frac{r}{\sigma(G)} - 1\right) K
\]
\end{theorem}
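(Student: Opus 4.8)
The plan is to build the embeddings one vertex at a time, processing the tree $G$ in a rooted order and using $K$-surjectivity to guarantee that each new vertex can be placed at the correct distance from its already-placed parent. Concretely, I would fix a root of $G$ and order its $n+1$ vertices $v_0, v_1, \dots, v_n$ so that each $v_i$ (for $i \geq 1$) has a unique neighbor $v_{p(i)}$ with $p(i) < i$; since $G$ is a tree this is just a BFS or DFS preorder. To construct a single copy, I would place $v_0$ anywhere in the remaining portion of $E$, and then greedily place each subsequent $v_i$ at some point of $E$ whose distance to the already-chosen image of $v_{p(i)}$ equals the required edge weight. The key observation is that $K$-surjectivity is exactly the tool that makes each greedy step possible: as long as enough points of $E$ remain available, the restriction of $d$ to that subset is surjective, so the required distance is realized.

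The heart of the argument is a careful accounting of how many points must be reserved at each step so that $K$-surjectivity can be invoked. The plan is to process vertices in a way that respects the degree ordering from Definition~\ref{stringiness}: the factor $(d_1+1)\prod_{i=2}^n d_i$ suggests that when I branch out from a vertex of degree $d_i$, I should keep roughly $d_i$ ``candidate'' options alive so that I can complete the remaining subtree without running out of room, and the ``$+1$'' on the largest degree accounts for the root or the initial choice of the first vertex. I expect to set up an inductive count showing that building one full disjoint copy of $G$ consumes at most $\sigma(G) \cdot K$ points of $E$, in the following sense: to extend a partial embedding while guaranteeing every greedy step has a surjective subset of size $\geq K$ to draw from, I need a supply of points whose size is governed by the product of the branching degrees. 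Then, since $|E| = rK$, I can repeat this construction disjointly at least $\lfloor r/\sigma(G)\rfloor$ or so times, losing at most one copy's worth of slack, which yields the stated bound $\bigl(r/\sigma(G) - 1\bigr)K$.

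The main obstacle, and the step I would spend the most care on, is proving the reservation bound $\sigma(G)\cdot K$ correctly — that is, showing that the multiplicative factor coming from the vertex degrees really is sufficient to run every surjectivity step, and that this product matches $\sigma(G)$ exactly rather than some looser bound. The subtlety is that $K$-surjectivity only promises a point at the right distance when the \emph{available} subset still has size at least $K$; so at each internal vertex of degree $d_i$ I must have enough ``backup'' points so that even after committing to one branch I retain $\geq K$ candidates for the next edge out of that vertex, and this is where the product $\prod d_i$ arises. I would formalize this with an induction on the subtree rooted at each vertex, where the inductive hypothesis states that completing the subtree below a vertex of degree $d$ requires a candidate pool of size (roughly) $K$ times the product of degrees in that subtree; combining the subtrees at the root, and absorbing the root's own degree into the $(d_1+1)$ factor, should reproduce $\sigma(G)$.

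Finally, I would verify the bookkeeping that converts ``one copy costs $\sigma(G)K$ points'' into the count of disjoint copies: because the copies must be pairwise disjoint, each completed copy removes its vertices from the pool, so after building $m$ copies I have used at most $m\,\sigma(G)K$ points, and I can continue as long as $rK - m\,\sigma(G)K$ stays above the threshold needed for one more copy. Choosing $m = \bigl(r/\sigma(G) - 1\bigr)K$ and checking that the residual pool is still large enough to invoke $K$-surjectivity at every step of copy number $m+1$ is the routine but necessary final check; the subtracted $K$ is precisely the slack that keeps every greedy placement legal down to the last copy.
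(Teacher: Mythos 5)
Your central greedy step does not follow from the definition of $K$-surjectivity, and this is a genuine gap rather than a detail. $K$-surjectivity says that if $|E'| \geq K$ then $d$ restricted to $E' \times E'$ hits every distance, i.e.\ \emph{some pair} in $E'$ realizes the weight $t$; it does not say that a \emph{fixed, already-placed} point $f(v_{p(i)})$ has a partner at distance $t$ inside $E'$. A specific point may have no neighbor at the required distance no matter how many points remain in the pool, so the step ``place $v_i$ at some point of $E$ whose distance to the already-chosen image of $v_{p(i)}$ equals the required edge weight'' can simply fail, and no amount of reserving backup points for \emph{later} edges repairs it. This pinned-versus-unpinned distinction is exactly the obstruction the paper's proof is built around: instead of extending one copy at a time, it constructs (by induction) a large \emph{disjoint family} of embeddings of a pruned tree $H = G - \{v_1,\dots,v_y\}$, collects the set $W$ of images of the branch vertex $w$ over all these partial embeddings, and then applies Lemma \ref{main2} to $W$ to find many disjoint stars inside $W$; the leaves of one completed copy of $G$ are the $w$-images of \emph{other} partial copies of $H$, which are then discarded. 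That recycling is what converts unpinned surjectivity into usable pinned information (via the coloring argument of Lemma \ref{main1}), and it is the step your outline is missing.

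There is also a quantitative inconsistency in your bookkeeping. You propose that one copy of $G$ ``consumes'' $\sigma(G)K$ points and that you can therefore repeat the construction about $\lfloor r/\sigma(G) \rfloor$ times; but the theorem demands $\left(\frac{r}{\sigma(G)} - 1\right)K$ disjoint copies, which is larger by a factor of $K$. In the paper's argument each copy occupies exactly $n+1$ points, and $\sigma(G)$ enters not as a per-copy cost of $\sigma(G)K$ points but as the multiplicative loss in the \emph{number} of disjoint partial embeddings at each stage of the induction: pruning the $y$ leaves at $w$ costs a factor of $y+1 = \deg_G w$ in the count, and the product of these factors over the whole induction is $\sigma(G)$ (with the $d_1 + 1$ term coming from the star-graph base case, Lemma \ref{main2}). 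So even if the greedy placement could be made to work, your accounting would only produce on the order of $r/\sigma(G)$ copies, not $(r/\sigma(G) - 1)K$.
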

\begin{corollary}
If $|E| \geq 2\sigma(G)K$, then there  is at least one embedding of $G$ into $E$.
\end{corollary}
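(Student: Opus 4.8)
The plan is to isolate the single combinatorial consequence of $K$-surjectivity that I need, reduce the counting statement to a pure existence statement, and then prove existence by an induction over the tree that produces the stringiness as a product.

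First I would record the \emph{only} place $K$-surjectivity is used. For $S \subseteq X$ and $t \in D$, call $p \in S$ \emph{$t$-covered in $S$} if some $q \in S \setminus \{p\}$ has $d(p,q) = t$. The set $B$ of non-covered points contains no pair at distance $t$, so if $|B| \geq K$ then $d$ would miss $t$ on $B \times B$, contradicting $K$-surjectivity; hence fewer than $K$ points of $S$ fail to be $t$-covered. I will also need a reverse form: for any fixed $B \subseteq S$, the number of $p \in S$ all of whose $t$-neighbours lie in $B$ is fewer than $|B| + K$, since any such $p$ outside $B$ is non-covered in $S \setminus B$. These two facts turn the global hypothesis into the usable local statement that almost every point sees almost every distance.

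Next I would reduce Theorem \ref{main} to the \emph{existence} statement that every $S \subseteq X$ with $|S| \geq \sigma(G)K$ contains a copy of $G$. Granting this, I build $A_G$ greedily: repeatedly extract one embedding and delete its $|V(G)|$ vertices, continuing while at least $\sigma(G)K$ points remain. Since $\sigma(G) \geq |V(G)|$ by Theorem \ref{stringinessbounds}, each step removes at most $\sigma(G)$ points, so the process runs until fewer than $\sigma(G)K$ points survive, and the number of embeddings produced is at least
\[
\frac{|E| - \sigma(G)K}{|V(G)|} \;\geq\; \frac{(r-\sigma(G))K}{\sigma(G)} \;=\; \left(\frac{r}{\sigma(G)} - 1\right)K ,
\]
using $|V(G)| \leq \sigma(G)$ (the bound being vacuous when $r < \sigma(G)$); the Corollary then follows since $r = 2\sigma(G)$ pushes the right-hand side above $1$. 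The heart is the existence statement, where the stringiness enters. I would root $G$ at a maximum-degree vertex and observe that, writing $c_v$ for the number of children of $v$,
\[
\sigma(G) = \prod_{v \in V(G)} (c_v + 1),
\]
the ``$+1$'' landing on the maximum degree precisely because rooting there minimizes the product. For each $v$ let $T_v$ be its subtree and $R(v,S) \subseteq S$ the set of admissible images of $v$ in some embedding of $T_v$ into $S$; I would prove from the leaves upward that $|R(v,S)| \geq |S| - (\sigma(T_v)-1)K$ for all $S$. The leaf case is $R(v,S)=S$, and taking $v$ to be the root with $|S| \geq \sigma(G)K$ yields $|R(v,S)| \geq K > 0$, giving the embedding.

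The main obstacle is exactly this inductive step: making the estimate collapse to the \emph{product} $\sigma(T_v) = (c_v+1)\prod_i \sigma(T_{u_i})$ rather than a weaker sum, while forcing the $c_v$ child-subtrees to be realised disjointly and to avoid the image $p$ of $v$. A point $p$ enters $R(v,S)$ once it has, for each child $u_i$ of connecting weight $t_i$, a $t_i$-neighbour lying in the corresponding $R(u_i,\cdot)$, with the subtrees placed disjointly; the coverage lemma and its reverse form bound the points failing any single child, and the inductive hypotheses supply the sizes of the $R(u_i,\cdot)$. Two subtleties drive the difficulty. First, disjointness embeds the children into a shrinking pool, so each child's admissible-root count must be controlled uniformly in the ambient set — here it helps that the \emph{complement} bound $(\sigma(T_{u_i})-1)K$ is independent of $|S|$, so the deletions stay harmless. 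Second, when several children carry equal weights one needs distinct neighbours at a common distance, calling for a Hall-type refinement of the coverage lemma that bounds by $mK$ the points having fewer than $m$ neighbours at a fixed distance. Arranging the bookkeeping so that each vertex contributes exactly the factor $(c_v+1)$ — and hence the clean constant $\sigma(G)$ — is where essentially all the work lies; $K$-surjectivity itself is spent entirely in the elementary coverage lemma of the first step.
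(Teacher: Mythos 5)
Your reduction of the counting statement to a pure existence statement via greedy extraction is sound (and for this corollary you only need existence, since $|E| \geq 2\sigma(G)K$ already exceeds your threshold $\sigma(G)K$); your coverage lemma and its Hall-type refinement are exactly the paper's Lemma \ref{main1}; and the identity $\sigma(G) = \prod_{v}(c_v+1)$ for a rooting at a maximum-degree vertex is correct. Note, though, that the paper obtains this corollary in one line from Theorem \ref{main}: with $r \geq 2\sigma(G)$ the main theorem gives $|A_G| \geq (r/\sigma(G)-1)K \geq K \geq 1$, so the only real content is the main theorem itself, which you are re-deriving by a genuinely different, bottom-up route.

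The genuine gap is the inductive step you yourself flag as the place ``where essentially all the work lies'': the estimate $|R(v,S)| \geq |S| - (\sigma(T_v)-1)K$ with $\sigma(T_v) = (c_v+1)\prod_i \sigma(T_{u_i})$ is asserted and its difficulties are listed, but it is not proved, and it is not clear it holds as stated. The obstruction is concrete: admissibility of $p$ depends on the sets $R(u_i, S\setminus\{p\})$, and the bad set $(S\setminus\{p\})\setminus R(u_i,S\setminus\{p\})$ is $p$-dependent --- a point $q$ can be an admissible root of $T_{u_i}$ in $S$ yet fail in $S\setminus\{p\}$ because every embedding below $q$ passes through $p$. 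Already for the two-edge path the natural accounting (fewer than $K$ points with no $t_1$-neighbour, fewer than $2K$ points whose $t_1$-neighbours all lie in the fixed bad set, and up to $2K$ points blamed by some $q$ whose unique $t_2$-neighbour is $p$) yields $4K$ rather than the required $3K$, and nothing in the sketch shows how the losses over several children collapse to the product $\prod(c_u+1)$ rather than a sum. The paper sidesteps all of this by inducting top-down on disjoint \emph{collections} of embeddings: it peels off the star of leaves at a vertex $w$ and applies Lemma \ref{main2} to the set $W$ of images of $w$, so the factor $y+1$ enters as a division of the size of the collection and the product structure of $\sigma$ accumulates automatically, with no $p$-dependent bad sets ever appearing. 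To complete your route you would need to actually establish the bottom-up estimate (possibly with a worse constant), or switch to the collection-based induction.
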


Combining this result with Theorem \ref{euclid} gives the following result in $\eff_q^d$:

\begin{corollary}
\label{maincorr}
Let $G$ be a weighted tree with edge weights in $\eff_q$. Then there exists a constant $C$ independent of $q$ and $G$ such that every subset $E$ of $\eff_q^d$ such that $|E| \geq C\sigma(G)q^{\frac{d+ 1}{2}}$ contains an embedding of $G$.
\end{corollary}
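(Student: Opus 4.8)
The plan is to obtain Corollary \ref{maincorr} as a direct specialization of the Main Theorem (Theorem \ref{main}), using Theorem \ref{euclid} to supply the required $K$-surjectivity. The only genuine work is to check that the two sets of hypotheses line up and then to keep careful track of the constants.

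First I would verify that the Iosevich--Rudnev distance $d(\{x_i\},\{y_i\}) = \sum(x_i - y_i)^2$ really is a symmetric distance function in the sense of Definition \ref{ksurj}: it maps $\eff_q^d \times \eff_q^d$ into $D = \eff_q$, and it is symmetric because $(x_i - y_i)^2 = (y_i - x_i)^2$ termwise. Theorem \ref{euclid} then guarantees that this $d$ is $K$-surjective with $K = C_0 q^{(d+1)/2}$, where $C_0$ is the constant supplied by that theorem, which is independent of $q$. Since the weighted tree $G$ has edge weights in $\eff_q = D$, all the hypotheses of Theorem \ref{main} and of its first corollary are met for the ambient set $X = \eff_q^d$.

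Next I would simply invoke the first corollary of Theorem \ref{main}: whenever $|E| \geq 2\sigma(G)K$ there is at least one embedding of $G$ into $E$. Substituting $K = C_0 q^{(d+1)/2}$, this threshold becomes $|E| \geq 2\sigma(G)\,C_0\, q^{(d+1)/2}$, so setting $C = 2C_0$ yields exactly the stated conclusion: every $E \subseteq \eff_q^d$ with $|E| \geq C\sigma(G)q^{(d+1)/2}$ contains an embedding of $G$.

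The one point that warrants care --- and the closest thing to an obstacle in an otherwise routine deduction --- is the assertion that $C$ is independent of both $q$ and $G$. Independence of $q$ is inherited directly from the $q$-uniformity of $C_0$ in Theorem \ref{euclid}. Independence of $G$ holds because every bit of $G$-dependence in the threshold has already been isolated into the explicit factor $\sigma(G)$, leaving the prefactor $C = 2C_0$ built only from the absolute constant $2$ and the $G$-free, $q$-free constant $C_0$. Thus a single choice of $C$ works uniformly over all trees $G$ and all prime powers $q$, which completes the argument.
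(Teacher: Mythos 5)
Your proposal is correct and follows exactly the route the paper intends: it combines the first corollary of Theorem \ref{main} (the $|E| \geq 2\sigma(G)K$ threshold) with the $K$-surjectivity value $K = C_0 q^{(d+1)/2}$ from Theorem \ref{euclid}, absorbing the factor of $2$ into the constant. Your explicit checks of symmetry and of the $q$- and $G$-independence of the constant are sound and simply make precise what the paper leaves implicit.
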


We compare this result to the following previous result:
\begin{theorem}[Bennett, Chapman, Covert, Hart, Iosevich, Pakianathan, \cite{BCCHIP}]
\label{knownpaths}
Let $G$ be a weighted path or star graph with edge weights in $\eff_q$, and suppose $G$ has $k$ edges. Then there exists a constant $C$ independent of $q$ and $G$ such that every subset $E$ of $\eff_q^d$ such that $|E| \geq Ckq^{\frac{d+ 1}{2}}$ contains an embedding of $G$.
\end{theorem}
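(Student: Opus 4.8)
The statement covers two families of trees, and I would treat them differently because only one of them is within comfortable reach of the machinery already assembled in this paper. For the star $K_{1,k}$ one computes $\sigma(K_{1,k}) = k+1$ (the centre contributes the factor $k+1$ and every leaf has degree $1$), so Corollary~\ref{maincorr} with $K = Cq^{(d+1)/2}$ from Theorem~\ref{euclid} already produces an embedding as soon as $|E| \geq C'(k+1)q^{(d+1)/2}$, which is exactly the claimed linear-in-$k$ threshold. The path is the genuinely interesting case: since $\sigma(P_{k+1}) = 3\cdot 2^{k-2}$, the general framework only yields an embedding once $|E| \gtrsim 2^{k}q^{(d+1)/2}$, exponentially worse than the target, so the path case calls for a direct quantitative argument rather than the stringiness bound.

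For paths the plan is to count labelled paths with a prescribed distance pattern $(t_1,\dots,t_k)$ and show the count is positive once $|E|\geq Ckq^{(d+1)/2}$. Writing $A_{t}$ for the $|E|\times|E|$ matrix with $A_{t}(x,y)=1$ exactly when $d(x,y)=t$, the number of such labelled walks is the bilinear form $\mathbf{1}^{\top}A_{t_1}A_{t_2}\cdots A_{t_k}\mathbf{1}$. The engine is the quantitative estimate underlying Theorem~\ref{euclid}: the Fourier analysis of Iosevich--Rudnev shows that the number of pairs of $E$ at any fixed distance $t$ equals $|E|^{2}/q + O\!\left(q^{(d-1)/2}|E|\right)$, which in operator form is the decomposition $A_{t} = \tfrac1q\,\mathbf{1}\mathbf{1}^{\top} + R_{t}$ with remainder $R_{t}$ of spectral norm $O(q^{(d-1)/2})$ on $\ell^{2}(E)$.

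Substituting this decomposition into the product and expanding, the leading contribution is $\mathbf{1}^{\top}(\tfrac1q\mathbf{1}\mathbf{1}^{\top})^{k}\mathbf{1} = |E|^{k+1}/q^{k}$, and every remaining term carries at least one factor $R_{t_i}$, each of which trades a factor $|E|/q$ for a factor of order $q^{(d-1)/2}$, i.e.\ multiplies by the per-edge ratio $q^{(d+1)/2}/|E|$. Summing the $\binom{k}{j}$ terms with $j$ remainder factors, the total relative error is at most $(1+q^{(d+1)/2}/|E|)^{k}-1$, which is $\leq \tfrac12$ once $|E|\geq Ckq^{(d+1)/2}$ for suitable $C$; the main term then dominates and the count is positive. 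The linear dependence on $k$ enters precisely here, as the number of edges over which the per-edge ratio must be accumulated.

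Finally I would pass from labelled walks to genuine (injective) embeddings by subtracting the degenerate walks that revisit a vertex. Forcing a coincidence $x_i=x_j$ loses a summation variable but can also drop an edge constraint, so the count of such walks is smaller than the main term by a factor of order $q/|E|$; multiplying by the $\binom{k+1}{2}$ choices of coincident pair still leaves a lower-order contribution in the nonvacuous range (recall $|E|\leq q^{d}$ forces $k \lesssim q^{(d-1)/2}$). I expect the main obstacle to be the bookkeeping of the third step: casting the Iosevich--Rudnev bound as a clean operator-norm statement so that the $k$-fold product expands multiplicatively, and verifying that the accumulated error grows only linearly rather than exponentially in $k$---this is exactly the point at which a careless union bound would reintroduce the wasteful $2^{k}$ of the stringiness approach, so the estimate must be organised around the geometric series above.
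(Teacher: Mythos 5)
This theorem is not proved in the paper at all: it is quoted from \cite{BCCHIP} and used only as a benchmark for Corollary \ref{maincorr}, so there is no internal proof to compare against. On its own terms, your star half is correct and non-circular: $\sigma(K_{1,k})=k+1$, and Lemma \ref{main2} with Theorem \ref{euclid} (i.e.\ Corollary \ref{maincorr}) gives the linear-in-$k$ threshold, and nothing in the proof of Theorem \ref{main} depends on Theorem \ref{knownpaths}. Your diagnosis of the path half is also right --- stringiness only gives $3\cdot 2^{k-2}$ for $P_{k+1}$, so a direct count is unavoidable --- and the decomposition $A_t=\tfrac1q\mathbf{1}\mathbf{1}^{\top}+R_t$ with $\|R_t\|=O(q^{(d-1)/2})$, followed by the geometric-series accounting of terms with $j$ remainder factors, is exactly the engine of \cite{BCCHIP} and is the correct way to get linear rather than exponential dependence on $k$.

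The genuine gap is the final step, passing from labelled walks to injective embeddings. The claim that a coincidence $x_i=x_j$ costs ``a factor of order $q/|E|$'' relative to the main term is unjustified and is not the right quantity. Already for the simplest degeneracy $x_{i+2}=x_i$ (possible only when $t_{i+1}=t_{i+2}$), the count is (walks on the collapsed $(k-2)$-edge pattern) times (the number of points of $E$ on a sphere about $x_i$), and the latter can be as large as $\min(|E|,Cq^{d-1})$ for particular $x_i$, not $|E|/q$; the honest worst-case ratio to the main term $|E|^{k+1}/q^k$ is then of order $q^{d+1}/|E|^2$, which exceeds your claimed $q/|E|$ whenever $|E|<q^d$. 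The theorem survives because $q^{d+1}/|E|^2\leq (Ck)^{-2}$ under the hypothesis, which absorbs the $\binom{k+1}{2}$ choices of coincident pair --- but this uses the hypothesis a second time, needs the pointwise sphere bound $|S_t|=O(q^{d-1})$, and for non-adjacent coincidences $x_i=x_j$ with $j-i\geq 3$ the collapsed object is a closed walk whose count must be controlled by a trace or Cauchy--Schwarz argument rather than a single heuristic factor. That bookkeeping is precisely the technical content of \cite{BCCHIP}; as written, your step would not go through without it.
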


The idea of this paper is to show that results of the type of Theorem \ref{knownpaths} are instances of a more general phenomenon. Corollary \ref{maincorr} is an example of this phenomenon applying to the standard distance on $\eff_q^d$; it allows extending these results to general trees for which no results existed. We also note that proof of Theorem \ref{main} is very modular, so it is possible to (for instance) use Theorem \ref{knownpaths} to get slightly better bounds for embeddings of more complicated trees in $\zee_q^d$. 

We proceed as follows: for illustrative purposes, we first state and prove Theorem \ref{easy}, a weaker version of Theorem \ref{main}, using Lemma \ref{easylemma}. We then give the very similar proof of Theorem \ref{main}, which relies on the analagous Lemma \ref{main2}. Finally we prove Theorem \ref{stringinessbounds}.

\section{Graph embeddings}

\subsection{An easier case of \ref{main}}
The proof of \ref{easy} is by induction; it is convenient to state the base case as a separate lemma.
\begin{lemma}
Let $X$ be a set with a symmetric distance function $d$ to a set of distances $D$, let $d$ be $K$-surjective, and let $E \subseteq X$ with $|E| = rK$. Then for any fixed $t \in D$, there are at least $\frac{r - 1}{2}K$ disjoint pairs $\{ e_i, f_i\}_{i \in I}$ in $E$ such that $d(e_i, f_i) = t$ for all $i \in I$.
\label{easylemma}
\end{lemma}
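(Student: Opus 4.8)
The plan is to extract the pairs one at a time by a greedy procedure, repeatedly invoking $K$-surjectivity on whatever points remain. Concretely, I would maintain a ``working set'' $S \subseteq E$, initialized to $S = E$, together with a collection $P$ of pairs, initialized empty. At each stage, as long as $|S| \geq K$, Definition \ref{ksurj} guarantees that the restriction of $d$ to $S \times S$ is surjective, so the fixed distance $t$ is realized by some pair of points $e, f \in S$. I would add $\{e, f\}$ to $P$, delete both $e$ and $f$ from $S$, and iterate. Because each newly chosen pair is drawn entirely from the current working set and both of its points are then removed, no two chosen pairs ever share a point; hence the collection $P$ is automatically disjoint, which is exactly the disjointness demanded in the statement.

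It then remains to count how many pairs the procedure produces. After $m$ iterations it has deleted exactly $2m$ points from $E$, so the working set has size $rK - 2m$. Another iteration is possible precisely when this size is still at least $K$, that is, when $rK - 2m \geq K$, equivalently $m \leq \tfrac{(r-1)K}{2}$. Thus the loop runs at least $\tfrac{(r-1)K}{2}$ times and returns at least $\tfrac{r-1}{2}K$ disjoint pairs at distance $t$, which is the claimed bound. (One in fact gets $\bigl\lfloor \tfrac{(r-1)K}{2}\bigr\rfloor + 1$ pairs, but the cleaner estimate $\tfrac{r-1}{2}K$ suffices and is what later proofs will want.)

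The one point that needs genuine care — and the step I expect to be the only real obstacle — is guaranteeing that the pair $\{e,f\}$ produced at each stage consists of two \emph{distinct} points. As stated, $K$-surjectivity only asserts that $t$ lies in the image of $d|_{S\times S}$, and a priori that image could be witnessed by a diagonal value $d(x,x)$; if $t$ were realizable only on the diagonal in every large subset, the lemma would fail, so this is the crux rather than a formality. I would resolve it by noting that in the settings of interest $t$ is realized off the diagonal — for instance, for the distance of Theorem \ref{euclid} any pair at distance $t \neq 0$ is distinct, and even $t=0$ is realized by distinct isotropic pairs — so surjectivity does deliver a genuine two-element pair; alternatively one builds off-diagonal realizability into the meaning of ``determining a distance.'' Once distinctness is secured, the greedy argument above goes through unchanged, and this lemma supplies the base case for the induction proving Theorem \ref{easy}.
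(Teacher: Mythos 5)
Your greedy extraction is just a constructive rephrasing of the paper's own argument (take a maximal disjoint collection of pairs at distance $t$; $K$-surjectivity forces the leftover set to have size $< K$, so at least $(r-1)K$ points are covered), and your count agrees with the paper's bound. The diagonal caveat you flag is a genuine imprecision in Definition \ref{ksurj}, but it is equally present in the paper's proof, which implicitly reads surjectivity as being witnessed by distinct points; your handling of it is reasonable and, if anything, more careful than the original.
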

\begin{proof}
Since $E$ is finite, let $I$ be maximal. Let $F$ be the union of all the pairs. Then by maximality of $I$, $E - F$ cannot contain any pair of points with distance $t$. By $K$-surjectivity, this means $|E - F| < K$; so $|F| \geq (r - 1)K$, meaning $|I| = |F| / 2 \geq \frac{r - 1}{2}K$ as required.
\end{proof}

\begin{theorem}
\label{easy}
Let $X$ be a set with a symmetric distance function $d$ to a set of distances $D$, let $d$ be $K$-surjective, and let $E \subseteq X$ with $|E| = rK$. Then for any weighted nonempty tree $G$ with edge weights in $D$, suppose $G$ has $n$ edges. Then there exists a disjoint collection $A_G$ of embeddings of $G$ into $E$ with
\[
|A_G| \geq \left(\frac{r + 1}{2^n} - 1\right) K.
\]
\end{theorem}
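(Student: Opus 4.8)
The plan is to prove Theorem~\ref{easy} by induction on the number of edges $n$ of the tree $G$, with Lemma~\ref{easylemma} serving as the base case $n = 1$. The key structural observation is that any nonempty tree $G$ with $n \geq 2$ edges has a leaf vertex $v$; removing $v$ together with its unique incident edge (of some weight $t \in D$) yields a smaller tree $G'$ with $n - 1$ edges. An embedding of $G$ is then built from an embedding of $G'$ by adjoining one new point at distance $t$ from the image of the neighbor of $v$. So the inductive strategy is: first produce many disjoint embeddings of $G'$ inside $E$, then extend each of them (or a large fraction of them) to a full embedding of $G$ by finding a fresh point at the correct distance, all while keeping the resulting copies disjoint.

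Concretely, I would first apply the induction hypothesis to $G'$ to obtain a disjoint collection of embeddings of $G'$ of size roughly $\left(\frac{r+1}{2^{n-1}} - 1\right)K$. For each such copy, let $w$ denote the image of the vertex in $G'$ to which the leaf $v$ was attached; to extend the copy I must find a point $p \in E$ with $d(w, p) = t$ that does not already lie in any of the chosen copies and is distinct from the points used so far. The natural way to harvest these extension points in a disjoint fashion is to set aside the points already committed to the $G'$-copies and run a greedy/maximality argument on the remainder, exactly as in Lemma~\ref{easylemma}: as long as the leftover set has size at least $K$, $K$-surjectivity guarantees it realizes the distance $t$, so I can keep pulling out new endpoints. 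The factor-of-two loss at each inductive step — mirroring the $\frac{r-1}{2}$ in the base case and matching the $2^n$ in the denominator — should arise from the bookkeeping of how many points get consumed relative to how many fresh extension points remain available.

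The main obstacle I anticipate is controlling the arithmetic of the recursion so that the bound telescopes cleanly to $\left(\frac{r+1}{2^n} - 1\right)K$. I need to track the effective value of $r$ as points are removed from $E$: each embedding of $G'$ occupies $n$ points, so after committing the $G'$-copies the pool available for extension points has shrunk, and I must verify that the parameter governing the next maximality argument degrades in exactly the way that reproduces the claimed constant. The delicate point is ensuring that the extension step does not require more structure than $K$-surjectivity provides — in particular, I only get the \emph{existence} of a point at distance $t$ from $w$ once the leftover set is large enough, not a matching that simultaneously extends all copies, so the greedy extraction must be interleaved with, or performed after, committing the smaller copies in a way that the counting survives. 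I expect the cleanest formulation is to phrase the inductive step so that the number of new disjoint copies of $G$ is at least half the number of available copies of $G'$ minus a $K$-sized deficit, and then check that $\frac{1}{2}\left(\frac{r+1}{2^{n-1}} - 1\right) - 1 \geq \frac{r+1}{2^n} - 1$ holds, which is where the precise shape of the bound is pinned down.
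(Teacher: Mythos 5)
Your induction skeleton — strip a leaf $v$, apply the hypothesis to $G' = G - v$, then extend each copy by one point at distance $t$ from the image of $v$'s neighbor $w$ — matches the paper's setup, and your base case is the right one. But the extension step as you describe it has a genuine gap. You propose to harvest the new leaf points from the leftover set $E$ minus the points committed to the $G'$-copies, arguing that once this remainder has size at least $K$, $K$-surjectivity lets you ``keep pulling out new endpoints'' at distance $t$ from a given $w$. That is not what $K$-surjectivity gives you: it guarantees only that a set of size $\geq K$ contains \emph{some pair} of points realizing the distance $t$, with both endpoints inside that set. It says nothing about the existence of a point at distance $t$ from a \emph{prescribed} point $w$ lying outside (or inside) the remainder. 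So the greedy extraction from the leftover pool cannot be steered toward any particular copy of $G'$, and the argument stalls exactly at the step you flagged as delicate.

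The paper's resolution is to apply Lemma~\ref{easylemma} not to a leftover pool but to the set $W = \{f(w) : f \in A_{G'}\}$ of neighbor-images of the copies themselves. A pair $\{f(w), f'(w)\} \subseteq W$ at distance $t$ lets you extend the copy $f$ by adjoining the single point $f'(w)$, discarding the rest of the copy $f'$; disjointness of the resulting $G$-embeddings is inherited from disjointness of $A_{G'}$ and of the pairs. This is where the factor of $2$ per edge actually comes from — half the copies of $G'$ are cannibalized to serve as leaf points — rather than from depletion of a reservoir of fresh points. Note also that the recursion you propose to verify, $\tfrac{1}{2}\bigl(\tfrac{r+1}{2^{n-1}} - 1\bigr) - 1 \geq \tfrac{r+1}{2^n} - 1$, is false (the left side is $\tfrac{r+1}{2^n} - \tfrac{3}{2}$); the correct telescoping identity is $\tfrac{1}{2}\bigl(\bigl(\tfrac{r+1}{2^{n-1}} - 1\bigr) - 1\bigr) = \tfrac{r+1}{2^n} - 1$, i.e.\ subtract one (in units of $K$) \emph{before} halving, which is exactly what the pairing argument produces.
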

\begin{proof}
We proceed by induction on $n$. The case $n = 0$ is tautological, and the case $n = 1$ is equivalent to Lemma \ref{easylemma}. 

So assume $n \geq 2$, which means $G$ must have a leaf. Fix any leaf; call it $v$, its associated edge $e$, the unique vertex adjacent to it $w$, and the edge weight $t$. Then $G - v$ is a tree with strictly fewer edges; so by the inductive hypothesis we have a disjoint collection $A_{G - v}$ of embeddings of $G - v$ into $E$ with 
\[
|A_{G - v}| \geq \left(\frac{r + 1}{2^{n - 1}} - 1\right) K.
\]

Consider the set $W = \{ f(w)| f \in A_{G - v}\}$. By Lemma \ref{easylemma}, there exist at least $\frac{|W| / K - 1}{2}$ disjoint pairs of points $(\{f(w), f'(w) \}$ in $W$ with $d(f(w), f'(w))  = t$. But for each of these pairs we can consider the function $g: V(G) \to E$ given by 
\[
g(x) = 
\begin{cases}
f'(w) & x = v \\
f(x) & x \neq v
\end{cases}
\]
By construction, these are disjoint embeddings of $G$ into $E$, and there are at least $\frac{|W| / K - 1}{2}K$ of them; but by disjointness of $A_{G - v}$, $|W| = |A_{G - v}|$ so there are at least
\[
\frac{|A_{G - v}| / K - 1}{2}K \geq \frac{\left(\frac{r + 1}{2^{n - 1}} - 1\right) - 1}{2} K = \left(\frac{r + 1}{2^n} - 1\right) K
\]
disjoint embeddings of $G$ as required.

\end{proof}
\begin{corollary}
If $|E| \geq 2^{n + 1}K$, there is at least one embedding of $G$ into $E$.
\end{corollary}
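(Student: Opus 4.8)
The plan is to derive this directly from Theorem~\ref{easy}, since the corollary is simply the specialization of that theorem to the regime in which its lower bound becomes positive. First I would set $r = |E|/K$, so that the hypothesis $|E| \geq 2^{n+1}K$ becomes $r \geq 2^{n+1}$. Applying Theorem~\ref{easy} with this value of $r$ then produces a disjoint collection $A_G$ of embeddings of $G$ into $E$ satisfying
\[
|A_G| \geq \left(\frac{r+1}{2^n} - 1\right)K.
\]

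Next I would show the right-hand side is at least $K$. Discarding the harmless $+1$ in the numerator and using $r \geq 2^{n+1}$ gives
\[
\frac{r+1}{2^n} - 1 \geq \frac{r}{2^n} - 1 \geq \frac{2^{n+1}}{2^n} - 1 = 1,
\]
so that $|A_G| \geq K$.

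Finally I would invoke integrality. Since $K > 0$ we have $|A_G| \geq K > 0$, and because $|A_G|$ is the cardinality of a collection and hence a nonnegative integer, this forces $|A_G| \geq 1$. Thus $A_G$ is nonempty and in particular contains at least one embedding of $G$ into $E$, as claimed.

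The only point requiring any care is this last integrality step: the estimate in Theorem~\ref{easy} is a real-valued inequality, so to extract a genuine embedding one must note that a count cannot lie strictly between $0$ and $1$. Beyond that I expect no obstacle, as all the content of the corollary is already contained in Theorem~\ref{easy}. It is worth flagging that the hypothesis $|E| \geq 2^{n+1}K$ is stronger than strictly necessary --- any $r > 2^n - 1$ already makes the bound positive and hence forces $|A_G| \geq 1$ --- but the stated threshold has the virtue of yielding the cleaner conclusion $|A_G| \geq K$ rather than merely $|A_G| \geq 1$.
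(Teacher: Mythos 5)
Your proposal is correct and is exactly the argument the paper intends: the corollary is stated without proof as an immediate consequence of Theorem~\ref{easy}, obtained by substituting $r = |E|/K \geq 2^{n+1}$ into the bound $\left(\frac{r+1}{2^n}-1\right)K \geq K > 0$ and using integrality of $|A_G|$. Your additional observations (that $r > 2^n - 1$ already suffices, and that the stated threshold gives the stronger conclusion $|A_G| \geq K$) are accurate.
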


Note that this proof would have worked equally well even if $d$ were not symmetric, which will not carry over to Theorem \ref{main}.

\subsection{Proof of Theorem \ref{main}}
Analagously to the proof of Theorem \ref{easy}, we will state a governing lemma (Lemma \ref{main2}); the structure will be identical except that we are building our graph $G$ out of stars instead of working purely with edges. For technical reasons, the application of the $K$-surjectivity assumption is more difficult in this case, so we will first state an auxilliary lemma, Lemma \ref{main1}.
\begin{lemma}
\label{main1}
Let $X$ be a set with a symmetric distance function $d$ to a set of distances $D$, let $d$ be $K$-surjective, and let $E \subseteq X$ with $|E| = rK$. Then for any fixed $t \in D$, $s \in \enn$, there are at most $sK$ points $e \in E$ for which there are not at least $s$ other distinct points $e_1, e_2, \dots, e_s \in E$ such that $d(e, e_i) = t$ for all $i$.
\end{lemma}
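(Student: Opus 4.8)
The plan is to reformulate $K$-surjectivity as a statement about a ``distance-$t$ graph'' and then exploit the fact that the bad points have small degree in it. Let $H$ be the graph on vertex set $E$ in which two points are joined by an edge exactly when they are at distance $t$. The key observation is that $K$-surjectivity (Definition \ref{ksurj}) is precisely the statement that $H$ has no independent set of size $K$ or larger: a subset $S \subseteq E$ is independent in $H$ if and only if $S$ contains no pair at distance $t$, so by $K$-surjectivity any such $S$ must have $|S| < K$.

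Now let $B \subseteq E$ be the set of ``bad'' points, i.e.\ those $e \in E$ having fewer than $s$ other points at distance $t$ from them. By definition every vertex of $B$ has degree at most $s - 1$ in $H$, and hence also degree at most $s - 1$ in the induced subgraph on $B$. I would then extract a large independent set from $B$ greedily, mirroring the maximality argument of Lemma \ref{easylemma}: repeatedly select a vertex $e \in B$, place it into a set $I$, and delete $e$ together with its (at most $s - 1$) neighbors from $B$. Each step removes at most $s$ vertices from $B$, so the process runs for at least $|B| / s$ steps, and the resulting set $I$ is independent in $H$ because each chosen vertex's neighbors are deleted before the next choice is made; thus $|I| \geq |B| / s$.

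Finally I would combine the two ingredients. Since $I$ is an independent set in $H$, the reformulation above forces $|I| < K$, whence $|B| \leq s|I| < sK$. This is exactly the desired bound on the number of bad points (and in fact gives the slightly stronger $|B| \leq s(K-1)$).

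The main idea—rather than a genuine obstacle—is recognizing that $K$-surjectivity should be read as an upper bound on the independence number of $H$, and then bridging this to the degree constraint on $B$ through the greedy extraction. The counting itself (each deletion step discarding at most $s$ vertices, and the extracted set being genuinely independent) is routine, but it must be carried out carefully to land the constant exactly at $sK$.
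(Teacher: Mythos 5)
Your proposal is correct and takes essentially the same approach as the paper: both construct the distance-$t$ graph, restrict attention to the vertices of degree less than $s$, and combine the degree bound with the observation that $K$-surjectivity caps the independence number at $K-1$. Your greedy extraction of a single independent set of size at least $|B|/s$ is just a repackaging of the paper's step of $s$-coloring the low-degree induced subgraph (i.e.\ partitioning it into $s$ independent sets, each of size less than $K$), and both yield the bound $|B| < sK$.
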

\begin{proof}
Create a graph $H$ whose vertices are the points of $E$ and where two vertices are connected by an edge if and only if the corresponding points of $E$ are distance $t$. Then consider the subgraph $H^*$ of $H$ generated by only those vertices of degree $< s$. By construction, the maximum degree of vertices in $H$ is less than $s$, which means $H^*$ can be $s$-colored, that is, partitioned into $s$ independent sets. Since the $K$-surjectivity condition guarantees that an independent set in $H$ (and thus in $H^*$) has size $< K$, it follows that $|H^*| < sK$. This means that at most $sK$ of the vertices of $H$ have degree $<s$, proving the lemma.
\end{proof}

\begin{lemma}
\label{main2}
Let $X$ be a set with a symmetric distance function $d$ to a set of distances $D$, let $d$ be $K$-surjective, and let $E \subseteq X$ with $|E| = rK$. Then for any weighted nonempty star graph $G \iso K_{1, n}$ with edge weights in $D$, $E$ contains at least $\frac{r - n}{n + 1}K$ disjoint embeddings of $G$.
\end{lemma}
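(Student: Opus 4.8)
The plan is to mirror the maximality argument of Lemma~\ref{easylemma}, with Lemma~\ref{main1} playing the role that Lemma~\ref{easylemma} played inside the proof of Theorem~\ref{easy}. Since $E$ is finite, choose a maximal disjoint collection $A_G$ of embeddings of $G$, let $F$ be the union of all their images (so $|F| = (n+1)|A_G|$), and set $E' = E \setminus F$, which has $|E'| = rK - (n+1)|A_G|$. Any embedding of $G$ lying entirely in $E'$ would be disjoint from every image in $A_G$, contradicting maximality; hence $E'$ contains no embedding of $G$. It therefore suffices to prove the following clean statement: any finite $S \subseteq X$ containing no embedding of $G$ satisfies $|S| \leq nK$. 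Applying this to $S = E'$ gives $(n+1)|A_G| = rK - |E'| \geq (r-n)K$, which rearranges to exactly the claimed bound $|A_G| \geq \frac{r-n}{n+1}K$.

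For the core claim I would group the $n$ edge weights of $K_{1,n}$ by their distinct values: say the distinct weights are $\tau_1, \dots, \tau_m$ with multiplicities $m_1, \dots, m_m$, so that $\sum_{j} m_j = n$. A point $c \in S$ serves as the center of an embedding precisely when, for each $j$, it has at least $m_j$ points of $S$ at distance $\tau_j$; one then selects $m_j$ distinct such leaves for each $j$, and leaves attached with different weights are automatically distinct, so injectivity is free. Now apply Lemma~\ref{main1} inside $S$ with $t = \tau_j$ and $s = m_j$: at most $m_j K$ points of $S$ fail to have $m_j$ other points of $S$ at distance $\tau_j$. Crucially, the proof of Lemma~\ref{main1} uses only $K$-surjectivity and never the hypothesis that the ground set has size an exact multiple of $K$, so it is valid for the arbitrary finite set $S$. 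Taking a union bound over $j$, at most $\sum_{j} m_j K = nK$ points of $S$ fail to be valid centers. Thus if $|S| > nK$ a valid center exists and $S$ contains an embedding of $G$; contrapositively, the absence of an embedding forces $|S| \leq nK$.

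The step requiring the most care is the handling of repeated edge weights, and this is where the invariant differs from the edge-by-edge approach of the easier theorem. Applying Lemma~\ref{main1} once per edge with $s = 1$ would only guarantee a single neighbor at each weight and would not produce the distinct leaves that a repeated weight demands; grouping by weight value and invoking Lemma~\ref{main1} with $s$ equal to the multiplicity is exactly what fixes this, and it is also what makes the total count of bad centers equal to $nK$ rather than (the number of distinct weights) times $K$. A pleasant bonus is that the ``$s$ \emph{other} distinct points'' phrasing of Lemma~\ref{main1} automatically excludes the center from its own set of leaves, so no separate disjointness argument for the center is needed. The remaining arithmetic is the same bookkeeping as in Lemma~\ref{easylemma}.
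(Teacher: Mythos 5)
Your proposal is correct and follows essentially the same route as the paper: a maximal disjoint family, Lemma~\ref{main1} applied once per distinct weight with $s$ equal to the multiplicity, and a union bound showing at most $nK$ points of the residual set fail to be centers. Your explicit remarks that Lemma~\ref{main1} applies to sets whose size is not an exact multiple of $K$ and that distinct weights give disjoint leaf sets are careful touches the paper leaves implicit, but they do not change the argument.
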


\begin{proof}
As in the proof of Lemma \ref{easylemma}, let $I$ be a maximal set of embeddings of $G$ into $E$, and define $F$ to be the union of all the embeddings contained in $I$. Then $E - F$ must have no embeddings of $G$.

Suppose the set of edge weights of $G$ is $\{ t_1, t_2, \dots, t_a\}$  appearing with multiplicities $\{m_1, m_2, \dots m_a\}$ respectively. Then by Lemma \ref{main1}, for each fixed $i$ there are at most $m_iK$ points of $E - F$ which are not distance $t_i$ from at least $m_i$ other points of $E - F$. Summing over $i$ we get that there are at most
\[
\sum_{i = 1}^{a} m_iK = nK
\]
points of $E - F$ which are not distance $t_i$ from at least $m_i$ other points of $E - F$ for every $i$. But if $x \in E - F$ is in fact distance  $t_i$ from at least $m_i$ other points of $E - F$ for every $i$, then $x$ forms the root of an embedding of the graph $G$. Thus $|E  - F| \leq nK$; so $|I| = |F|/(n + 1) \geq \frac{rK - nK}{n + 1} = \frac{r - n}{n + 1}K$ as required.
\end{proof}

Note that when $n = 1$, this is exactly the statement of Lemma \ref{easylemma}, but when $n \geq 2$ we may have to deal with repeated edge weights, which adds the extra complexity.

Now we are ready to prove the main theorem, Theorem \ref{main}:

\begin{proof}
The proof proceeds by strong induction on the number of edges in $G$. If $G$ contains no edges, $\sigma(G) = 1$ and the theorem is clearly true; if $G$ is a star graph $K_{1, n}$, then $\sigma(G) = n + 1$ and the theorem is just Lemma \ref{main2}. 

So assume $G$ is not a star graph. Then we can always find a vertex $w \in G$ such that:
\begin{enumerate}
\item $w$ is not a leaf of $G$ (equivalently, $\deg_G w \geq 2$).
\item All but one of the vertices connected to $G$ are leaves (call these leaves $v_1, v_2, \dots, v_y$, and the associated distances $t_1, t_2, \dots, t_y$, where $y = \deg_G w - 1$.)
\item There exists a vertex $v \neq w$ in $G$ such that $\deg_G v \geq \deg_G w$.
\end{enumerate}
To see this, let $L$ be the set of leaves of $G$. Then since $G$ is a tree which is not a star graph, $G - L$ is a tree which contains at least one edge; any leaf of $G - L$ satisfies conditions 1 and 2, and since $G - L$ has at least two leaves, at least one of these must satisfy condition 3.

Define the graph $H$ to be $G - \{v_1, v_2, \dots v_y\}$. By conditions 1 and 2, $H$ is a tree with fewer edges than $G$; by condition 3, $\sigma(H) = \frac{\sigma(G)}{y + 1}$ (since we can reorder the product in Definition \ref{stringiness} to make $v$ correspond to $d_1$, and all we lose by deleting these leaves is a factor of $\deg_G w$). By the inductive hypothesis we have a disjoint collection $A_H$ of embeddings of $H$ into $E$ with 
\[
|A_H| \geq \left(\frac{r}{\sigma(H)} - 1\right) K.
\]

Consider the set $W = \{ f(w)| f \in A_H\}$. By Lemma \ref{main2}, there exist at least $\frac{|W| / K - n}{n + 1}$ disjoint sets of points $\{f(w), f_1(w), \dots, f_y(w) \}$ contained in $W$ with $d(f(w), f_i(w)) = t_i$ for every $i$ (i.e. embedddings of a particular star graph). But for each of these sets we can consider the function $g: V(G) \to E$ given by 
\[
g(x) = 
\begin{cases}
f_i(w) & x = v_i \\
f(x) & x \notin \{v_1 \dots v_y \}
\end{cases}
\]
By construction, these are disjoint embeddings of $G$ into $E$, and there are at least $\frac{|W| / K - n}{n + 1}K$ of them; but by disjointness of $A_H$, $|W| = |A_H|$ so there are at least
\begin{align*}
\frac{|A_H| / K - n}{n + 1}K &\geq \frac{\left(\frac{r}{\sigma(H)} - 1\right) - n}{n + 1} K \\
&= \left(\frac{r}{(n + 1) \cdot \sigma(H)} - 1\right) K \\
 &= \left(\frac{r}{\sigma(G)} - 1\right) K.
\end{align*}
disjoint embeddings of $G$ as required.

\end{proof}
Note that in view of Theorem \ref{stringinessbounds}, this is stronger than Theorem \ref{easy}

\subsection{Proof of Theorem \ref{stringinessbounds}}
\begin{proof}
For the lower bound on $\sigma(n)$, a simple inductive argument suffices; if $n = 1$ then $\sigma(n) = 2$ since there is only one possible graph with one edge; if $n > 1$ then deleting a leaf must decrease the stringiness since the degree of the other vertex decreases; so for a graph with $n$ edges, $\sigma(G) > \sigma(G - v) \geq n$; so $\sigma(G) \geq n + 1$ (since $\sigma(G) \in \zee$).

For the upper bound, write the degrees of the vertices of $G$ as $d_1, d_2, \dots d_{n +1}$; without loss of generality say $d_{n + 1} = 1$. Then by the arithmetic-geometric mean inequality:
\begin{align*}
\left((d_1 + 1)\cdot d_2 \cdot d_3 \cdot \dots \cdot d_n\right)^{1/n} &\leq \frac{d_1 + 1 + d_2 + d_3 + \dots d_n}{n} \\
\sigma(G)^{1 /n} & \leq \frac{\sum d_i}{n} \\
\sigma(G)^{1 /n} & \leq \frac{2n}{n} \\
\sigma(G) & \leq 2^n.
\end{align*}
\end{proof}

\section{Acknowledgements}
I would like to thank Alex Iosevich and Steven Senger for their help, support and feedback.

\end{document}